\newtheorem{theorem}{Theorem}
\newtheorem{corollary}[theorem]{Corollary}
\newtheorem{lemma}[theorem]{Lemma}
\newenvironment{proof}[1][Proof]{\noindent\textbf{#1.} }{\ \rule{0.5em}{0.5em}}
\begin{document}

\centerline{{\Large {Global Attractivity in Nonlinear Higher Order} }}

\vspace{0.5ex}

\centerline{{\Large {Difference Equations in Banach Algebras} }}

\vspace{2ex}

\centerline{H. SEDAGHAT \footnote{Department of Mathematics, Virginia Commonwealth University, Richmond, Virginia, 23284-2014, USA; Email: hsedagha@vcu.edu}}

\vspace{2ex}

\begin{abstract}
\noindent Non-autonomous, higher order difference equations with linear
arguments of type%
\[
x_{n+1}=\sum_{i=0}^{k}a_{i}x_{n-i}+g_{n}\left(  \sum_{i=0}^{k}b_{i}%
x_{n-i}\right)
\]

\noindent are well-defined on Banach algebras. Their scalar forms with real
variables and parameters have appeared frequently in the literature. By
generalizing existing results from real numbers to algebras and using a new
result on reduction of order, new sufficient conditions are obtained for the
convergence to zero of all solutions of nonlinear difference equations with
linear arguments. Where reduction of order is possible, these conditions
extend the ranges of parameters for which the origin is a global attractor
even when all variables and parameters are real numbers.

\end{abstract}

\bigskip

\section{Introduction}

Special cases of the following type of higher order difference equation have
frequently appeared in the literature in different contexts, both pure and
applied:%
\begin{equation}
x_{n+1}=\sum_{i=0}^{k}a_{i}x_{n-i}+g_{n}\left(  \sum_{i=0}^{k}b_{i}%
x_{n-i}\right)  ,\quad n=0,1,2,\ldots\label{gla}%
\end{equation}

We assume here that $k$ is a fixed positive integer and for each $n$, the
function $g_{n}:\mathbb{X}\rightarrow\mathbb{X}$ is defined on a
real or complex Banach algebra $\mathbb{X}$ with identity. The parameters
$a_{i},b_{i}$ are fixed elements in $\mathbb{X}$ such that
\[
a_{k}\not =0\text{ or }b_{k}\not =0.
\]

Upon iteration, Equation (\ref{gla}) generates a unique sequence of points
$\{x_{n}\}$ in $\mathbb{X}$ (its \textit{solution}) from any given set of
$k+1$ \textit{initial values} $x_{0},x_{-1},\ldots,x_{-k}\in\mathbb{X}$.
The number $k+1$ is the \textit{order} of the difference equation.

Special cases of Equation (\ref{gla}), in the set of real numbers, appeared in
the classical economic models of the business cycle in twentieth century in
the works of Hicks \cite{Hic}, Puu \cite{Puu}, Samuelson \cite{Sam} and
others; see \cite{bk1}, Section 5.1 for some background and references. Other
special cases of (\ref{gla}) occurred later in mathematical studies of
biological models ranging from whale populations to neuron activity; see,
e.g., Clark \cite{Clr}, Fisher and Goh \cite{FG}, Hamaya \cite{Ham} and
Section 2.5 in Kocic and Ladas \cite{KL}.

The dynamics of special cases of (\ref{gla}) with $\mathbb{X=R}$ have been
investigated by several authors. Hamaya uses Liapunov and semicycle methods in
\cite{Ham} to obtain sufficient conditions for the global attractivity of the
origin for the following special case of (\ref{gla})
\[
x_{n+1}=\alpha x_{n}+a\tanh\left(  x_{n}-\sum_{i=1}^{k}b_{i}x_{n-i}\right)
\]
with $0\leq\alpha<1$, $a>0$ and $b_{i}\geq0$. These results can also be
obtained using only the contraction method in \cite{Sgeo}. The results in
\cite{Sgeo} are also used in \cite{bk1}, Section 4.3D, to prove the global
asymptotic stability of the origin for an autonomous special case of
(\ref{gla}) with $a_{i},b_{i}\geq0$ for all $i$ and $g_{n}=g$ for all $n$,
where $g$ is a continuous, non-negative function. The study of global
attractivity and stability of fixed points for other special cases of
(\ref{gla}) appear in \cite{GLV} and \cite{KPS}; also see \cite{KL}, Section 6.9.

The second-order case ($k=1$) has been studied in greater depth. Kent and
Sedaghat obtain sufficient conditions in \cite{KS} for the boundedness and
global asymptotic stability of
\begin{equation}
x_{n+1}=cx_{n}+g(x_{n}-x_{n-1}) \label{sed}%
\end{equation}

Also see \cite{Sed1}. In \cite{Elm}, El-Morshedy improves the convergence
results of \cite{KS} for (\ref{sed}) and also gives necessary and sufficient
conditions for the occurrence of oscillations. The boundedness of solutions of
(\ref{sed}) is studied in \cite{S97} and periodic and monotone solutions of
(\ref{sed}) are discussed in \cite{Sed2}. Li and Zhang study the bifurcations
of solutions of (\ref{sed}) in \cite{LZ}; their results include the
Neimark-Sacker bifurcation (discrete analog of Hopf).

A more general form of (\ref{sed}), i.e., the following equation
\begin{equation}
x_{n+1}=ax_{n}+bx_{n-1}+g_{n}(x_{n}-cx_{n-1}) \label{sed1}%
\end{equation}
is studied in \cite{SKy} where sufficient conditions for the occurrence of
periodic solutions, limit cycles and chaotic behavior are obtained using
reduction of order and factorization of the above difference equation into a
pair of equations of lower order. See \cite{bk2} for some background on order
reduction methods. These methods are used in \cite{dkmos} to determine
sufficient conditions for occurrence of limit cycles and chaos in certain
rational difference equations of the following type%
\[
x_{n+1}=\frac{ax_{n}^{2}+bx_{n-1}^{2}+cx_{n}x_{n-1}+dx_{n}+ex_{n-1}+f}{\alpha
x_{n}+\beta x_{n-1}+\gamma}%
\]
that are special cases of (\ref{sed1}).

In this paper, by generalizing recent results on reduction of order, together
with generalizations of some convergence results from the literature we obtain
sufficient conditions for the global attractivity of the origin for
(\ref{gla}) in the context of Banach algebras. These results also extend
previously known parameter ranges, even in the case of real numbers, i.e.,
$\mathbb{X=R}$ and show that convergence may occur in some cases where the
functions $g_{n}$ or the unfolding map of (\ref{gla}) are not contractions.

Unless otherwise stated, throughout the rest of this paper $\mathbb{X}$ will
denote a real or complex Banach algebra with identity 1 (since
there is very little likelihood of confusion, 1 also denotes the identity of
the underlying field of real or complex numbers). For the basics of Banach
algebras, see, e.g., \cite{Krs} or \cite{Wil}. Each Banach algebra is a Banach
space together with a multiplication operation $xy$ that is associative,
distributes over addition and satisfies the norm inequality
\begin{equation}
|xy|\leq|x||y| \label{ba}%
\end{equation}
with $|1|=1.$ The multiplication by real or complex numbers (scalars) that is
inherited from the vector space structure of $\mathbb{X}$ is made consistent
with the main multiplication by assuming that the following equalities hold
for all scalars $\alpha$:
\[
\alpha(xy)=(\alpha x)y=x(\alpha y).
\]

Elements of type $\alpha1$ where $\alpha$ is a real (or complex) number are
the constants in $\mathbb{X}$. The set $\mathbb{R}$ ($\mathbb{C})$ is a real
(complex) commutative Banach algebra with identity over itself with respect to 
the ordinary addition and multiplication of
complex numbers and the absolute value as norm. The set $C[0,1]$ of all
continuous real valued functions on the interval [0,1] forms a commutative,
real Banach algebra relative to the sup, or max, norm. The identity element is
the constant function $x(r)=1$ for all $r\in\lbrack0,1]$. The other constants
in $C[0,1]$ are just the constant functions on [0,1].

An element $x\in\mathbb{X}$ is invertible, or a unit, if there is $x^{-1}%
\in\mathbb{X}$ (the inverse of $x$) such that $x^{-1}x=1$. The collection of 
all invertible elements
of $\mathbb{X}$ forms a group $\mathcal{G}$ (the group of units) that contains
all nonzero constants. For each $u\in\mathcal{G}$ if $x\in\mathbb{X}$
satisfies the inequality
\[
|x-u|\leq\frac{1}{|u^{-1}|}%
\]
then it can be shown that $x\in\mathcal{G}$. It follows $\mathcal{G}$ is open
relative to the metric topology of $\mathbb{X}$ and contains an open ball of
radius $1/|u^{-1}|$ centered about each $u\in\mathcal{G}$. Since the zero
element is not invertible, $\mathcal{G}\not =\mathbb{X}.$ If $\mathbb{X}$ is
either $\mathbb{R}$ or $\mathbb{C}$ then $\mathcal{G}=\mathbb{X}%
\backslash\{0\}$. In the algebra $C[0,1]$ units are functions that do not
assume the (scalar) value 0.

\section{General results on convergence}

Consider the non-autonomous difference equation%
\begin{equation}
x_{n+1}=f_{n}(x_{n},x_{n-1},\ldots,x_{n-k}) \label{gde}%
\end{equation}
with a given sequence of functions $f_{n}:\mathbb{X}^{k+1}\rightarrow
\mathbb{X}$. We say that the origin is \textit{globally exponentially stable}
if all solutions $\{x_{n}\}$ of (\ref{gde}) in $\mathbb{X}$ satisfy the norm
inequality%
\[
|x_{n}|\leq c^{n}\mu
\]
where $c\in(0,1)$ and $\mu>0$ are real constants such that $c$ is independent
of the initial values $x_{0},x_{-1},\ldots,x_{-k}\in\mathbb{X}$.

The next result, which is true for all Banach spaces (not just algebras)
generalizes Theorem 3 in \cite{Sgeo}.

\begin{lemma}
\label{gas0}Let $\mathbb{X}$ be a Banach space and assume that for some real
$\alpha\in(0,1)$ the functions $f_{n}$ satisfy the norm inequality%
\begin{equation}
|f_{n}(\xi_{0},\xi_{1},\ldots,\xi_{k})|\leq\alpha\max\{|\xi_{0}|,\ldots
,|\xi_{k}|\} \label{lwc}%
\end{equation}
for every $n$ and all $(\xi_{0},\ldots,\xi_{k})\in\mathbb{X}^{k+1}$. Then
every solution $\{x_{n}\}$ of (\ref{gde}) with given initial values
$x_{0},x_{-1},\ldots,x_{-k}\in\mathbb{X}$ satisfies
\[
|x_{n}|\leq\alpha^{n/(k+1)}\max\{|x_{0}|,|x_{-1}|,\ldots,|x_{-k}|\}.
\]
Therefore, the origin is globally exponentially stable.
\end{lemma}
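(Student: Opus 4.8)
The plan is to bound $|x_n|$ in blocks of $k+1$ consecutive terms. First I would introduce the notation $M_j = \max\{|x_{j}|, |x_{j-1}|, \ldots, |x_{j-k}|\}$ for the maximum norm over a window of $k+1$ terms, so that $M_{-k+k} = M_0 = \max\{|x_0|, |x_{-1}|, \ldots, |x_{-k}|\}$ is determined by the initial data. The hypothesis \eqref{lwc} applied with $(\xi_0,\ldots,\xi_k) = (x_n, x_{n-1},\ldots, x_{n-k})$ gives immediately $|x_{n+1}| \le \alpha M_n$ for every $n \ge 0$.

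Next I would prove by induction on $m \ge 0$ that $M_{m(k+1)} \le \alpha^{m} M_0$; more precisely, I would track the terms within a block. The key observation is that once we have advanced past a full window — that is, for indices $n+1, n+2, \ldots, n+k+1$ — each new term $x_{n+j}$ with $1 \le j \le k+1$ satisfies $|x_{n+j}| \le \alpha M_{n+j-1}$, and $M_{n+j-1}$ is a maximum taken over terms all of which are either among $x_{n-k},\ldots,x_n$ (each $\le M_n$) or are themselves already-bounded later terms $x_{n+1},\ldots,x_{n+j-1}$ (each $\le \alpha M_n \le M_n$ since $\alpha<1$). Hence $M_{n+j-1} \le M_n$ for $1 \le j \le k+1$, and in particular after a full block of $k+1$ steps every entry of the window $M_{n+k+1}$ has the form $x_{n+j}$ with $1\le j\le k+1$, so $|x_{n+j}| \le \alpha M_{n+j-1} \le \alpha M_n$. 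Taking the max over those $k+1$ entries yields $M_{n+k+1} \le \alpha M_n$. Iterating from $n=0$ gives $M_{m(k+1)} \le \alpha^m M_0$ for all $m \ge 0$.

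Finally I would convert this block estimate into the stated pointwise bound. For arbitrary $n \ge 0$ write $n = m(k+1) + r$ with $0 \le r < k+1$, so that $m = \lfloor n/(k+1)\rfloor \ge n/(k+1) - 1$. Since $x_n$ lies in the window indexed by $M_{m(k+1)+r-1}$ (or is handled directly by $|x_n|\le \alpha M_{n-1}$), and by the within-block monotonicity above $M_{m(k+1)+r} \le M_{m(k+1)} \le \alpha^m M_0$, we get $|x_n| \le \alpha^m M_0 \le \alpha^{n/(k+1)-1} M_0$. Absorbing the harmless constant $\alpha^{-1}$ by instead using the cleaner bound $|x_n| \le \alpha^{\lfloor n/(k+1)\rfloor} M_0 \le \alpha^{n/(k+1)} M_0 \cdot \alpha^{\{n/(k+1)\}\cdot(-1)}$ is slightly awkward, so in the write-up I would simply note that $|x_n| \le \alpha^{(n-k)/(k+1)} M_0$ suffices and is $\le \alpha^{n/(k+1)}M_0$ up to relabeling; alternatively one checks the claimed exponent $n/(k+1)$ directly by verifying the base cases $n=0,1,\ldots,k$ separately (where $\alpha^{n/(k+1)} \ge \alpha$ and $|x_n| \le \alpha M_0$). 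Global exponential stability then follows with $c = \alpha^{1/(k+1)} \in (0,1)$ and $\mu = M_0$.

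The main obstacle is purely bookkeeping: making the induction on windows precise, since the window $M_n$ mixes old terms (bounded by the initial max) with newly produced terms (already bounded by $\alpha$ times an earlier max). Once one observes that $\alpha < 1$ forces every newly produced term to be no larger than the running window maximum, the windows are nonincreasing and drop by a factor $\alpha$ every $k+1$ steps, and nothing deeper is needed — in particular the algebra structure of $\mathbb{X}$ plays no role, only the norm.
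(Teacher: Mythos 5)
Your argument is correct and is essentially the paper's: both rest on the one-step bound $|x_{n+1}|\le\alpha\max\{|x_{n}|,\ldots,|x_{n-k}|\}$ together with the observation that $\alpha<1$ keeps the running window maximum nonincreasing, the paper simply organizing this as a direct induction on $n$ (after first proving $|x_{n}|\le\alpha\mu$ for all $n\ge1$, which covers the first block of indices) rather than in blocks of $k+1$. One small correction to your endgame: the inequality $\alpha^{(n-k)/(k+1)}\le\alpha^{n/(k+1)}$ is backwards for $\alpha<1$, so that route only gives the stated bound up to a multiplicative constant; however, your other suggested fix --- combining $|x_{n}|\le\alpha M_{n-1}$ with $M_{n-1}\le\alpha^{\lfloor(n-1)/(k+1)\rfloor}M_{0}$ and noting $\lfloor(n-1)/(k+1)\rfloor+1\ge n/(k+1)$ --- does yield the exact claimed inequality.
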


\begin{proof}
Let $\mu=\max\{|x_{0}|,|x_{-1}|,\ldots,|x_{-k}|\}.$ If $\{x_{n}\}$ is the
solution of (\ref{gde}) with the given initial values then we first claim that
$|x_{n}|\leq\alpha\mu$ for all $n\geq1.$ By (\ref{lwc})%
\[
|x_{1}|=|f_{0}(x_{0},x_{-1},\ldots,x_{-k})|\leq\alpha\max\{|x_{0}%
|,\ldots,|x_{-k}|\}=\alpha\mu
\]

and if for any $j\geq1$ it is true that $|x_{n}|\leq\alpha\mu$ for
$n=1,2,\ldots,j$ then%
\begin{align*}
|x_{j+1}|  &  =|f_{j}(x_{j},x_{j-1},\ldots,x_{j-k})|\leq\alpha\max
\{|x_{j}|,|x_{j-1}|,\ldots,|x_{j-k}|\}\\
&  \leq\alpha\max\{\mu,\alpha\mu\}=\alpha\mu.
\end{align*}

Therefore, our claim is true by induction. In particular, since $0<\alpha<1$
we have shown that $|x_{n}|\leq\alpha^{n/(k+1)}\mu$ for $n=1,2,\ldots,k+1.$
Now suppose that $|x_{n}|\leq\alpha^{n/(k+1)}\mu$ is true for $n\leq m$ where
$m\geq k+1.$ Then%
\begin{align*}
|x_{m+1}|  &  =|f_{m}(x_{m},x_{m-1},\ldots,x_{m-k})|\leq\alpha\max
\{|x_{m}|,|x_{m-1}|,\ldots,|x_{m-k}|\}\\
&  \leq\alpha\mu\max\{\alpha^{m/(k+1)},\alpha^{(m-1)/(k+1)},\ldots
,\alpha^{(m-k)/(k+1)}\}\\
&  =\alpha^{(m+1)/(k+1)}\mu
\end{align*}
and the proof is complete by induction.
\end{proof}

\medskip

The above induction argument is used by Berezansky, Braverman and Liz in
\cite{BBL} in the case $\mathbb{X=R}$ and by Xiao and Yang in \cite{XY} in the
autonomous case ($f_{n}=f$ is independent of $n$) for general Banach spaces.
As we see above, this induction argument generalizes to non-autonomous
equations in Banach spaces. Other approaches that yield convergence results
similar to Lemma \ref{gas0} for $\mathbb{X=R}$\ are discussed by Liz in
\cite{Liz}.

For $\mathbb{X=R}$ Lemma \ref{gas0} is also implied by Theorem 2 in
\cite{Mem}\ where Memarbashi uses a contraction argument adapted from Theorem
3 in \cite{Sgeo} (exponential stability, autonomous case in $\mathbb{R}$).
Contraction arguments with their geometric flavor are intuitively appealing
and they also work for non-exponential asymptotic stability; see \cite{Sgeo}
for the autonomous case and \cite{Mem2} which extends the result in
\cite{Sgeo} to certain non-autonomous equations.

For a general Banach space the type of convergence is dictated by the given
norm. For instance, in $C[0,1]$ with the sup, or max, norm convergence to the
zero function in Lemma \ref{gas0} is uniform.

Next, define the following sequence of functions on a Banach algebra
$\mathbb{X}$
\begin{equation}
f_{n}(\xi_{0},\xi_{1},\ldots,\xi_{k})=\sum_{i=0}^{k}a_{i}\xi_{i}+g_{n}\left(
\sum_{i=0}^{k}b_{i}\xi_{i}\right)  \label{fn}%
\end{equation}
The following corollary of Lemma \ref{gas0} generalizes previous convergence
theorems proved for the autonomous case with $\mathbb{X=R}$; e.g., the results
in \cite{Ham} or Theorem 4.3.9(b) in \cite{bk1}.

\begin{lemma}
\label{gas3}Let $g_{n}:\mathbb{X}\rightarrow\mathbb{X}$ be a sequence of
functions on a real or complex Banach algebra $\mathbb{X}$.
Assume that there is a real number $\sigma>0$ such that%
\begin{equation}
|g_{n}(\xi)|\leq\sigma|\xi|,\quad\xi\in\mathbb{X}\label{slg}%
\end{equation}
for all $n$ and further, for coefficients $a_{i},b_{i}$ (real or complex) we
assume that the inequality
\begin{equation}
\sum_{i=0}^{k}(|a_{i}|+\sigma|b_{i}|)<1\label{as}%
\end{equation}
holds. Then every solution $\{x_{n}\}$ of (\ref{gla}) with initial values
$x_{0},x_{-1},\ldots,x_{-k}\in\mathbb{X}$ satisfies%
\[
|x_{n}|\leq\alpha^{n/(k+1)}\max\{|x_{0}|,|x_{-1}|,\ldots,|x_{-k}%
|\},\quad\alpha=\sum_{i=0}^{k}(|a_{i}|+\sigma|b_{i}|).
\]

\end{lemma}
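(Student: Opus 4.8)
The plan is to obtain this as an immediate corollary of Lemma \ref{gas0}: it suffices to check that the functions $f_n$ defined in (\ref{fn}) satisfy the hypothesis (\ref{lwc}) with the constant $\alpha=\sum_{i=0}^{k}(|a_i|+\sigma|b_i|)$. First I would record that this $\alpha$ really lies in $(0,1)$: the upper bound $\alpha<1$ is precisely assumption (\ref{as}), while $\alpha>0$ follows because $a_k\neq0$ or $b_k\neq0$, so at least one summand $|a_k|+\sigma|b_k|$ is strictly positive. Thus $\alpha\in(0,1)$, which is exactly the range Lemma \ref{gas0} requires.

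The one substantive step is the norm estimate. Fix $n$ and $(\xi_0,\ldots,\xi_k)\in\mathbb{X}^{k+1}$. Using the triangle inequality, the submultiplicativity (\ref{ba}) of the norm (or, for scalar coefficients, the identity $|\alpha x|=|\alpha|\,|x|$ valid in any Banach space), and the growth bound (\ref{slg}) applied to the argument $\sum_{i=0}^{k}b_i\xi_i\in\mathbb{X}$, one gets
\[
|f_n(\xi_0,\ldots,\xi_k)|\le\sum_{i=0}^{k}|a_i|\,|\xi_i|+\sigma\left|\sum_{i=0}^{k}b_i\xi_i\right|\le\sum_{i=0}^{k}(|a_i|+\sigma|b_i|)\,|\xi_i|.
\]
Bounding each $|\xi_i|$ by $\max\{|\xi_0|,\ldots,|\xi_k|\}$ and factoring yields $|f_n(\xi_0,\ldots,\xi_k)|\le\alpha\max\{|\xi_0|,\ldots,|\xi_k|\}$, which is (\ref{lwc}).

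With (\ref{lwc}) verified for every $n$, Lemma \ref{gas0} applies directly to (\ref{gla})—which is (\ref{gde}) with $f_n$ as in (\ref{fn})—and produces the stated inequality $|x_n|\le\alpha^{n/(k+1)}\max\{|x_0|,|x_{-1}|,\ldots,|x_{-k}|\}$ for every solution. I do not expect a real obstacle here; the only places that need attention are confirming $\alpha\in(0,1)$ from (\ref{as}) together with $a_k\neq0$ or $b_k\neq0$, and invoking (\ref{ba}) correctly when the $a_i,b_i$ are algebra elements rather than scalars, for which $|a_i\xi_i|\le|a_i|\,|\xi_i|$ still holds. All the remaining work—the two nested inductions giving the exponential decay—is already carried out inside the proof of Lemma \ref{gas0}.
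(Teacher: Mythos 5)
Your proposal is correct and follows essentially the same route as the paper: verify that the maps $f_n$ in (\ref{fn}) satisfy (\ref{lwc}) with $\alpha=\sum_{i=0}^{k}(|a_i|+\sigma|b_i|)$ via the triangle inequality, (\ref{ba}) and (\ref{slg}), then invoke Lemma \ref{gas0}. Your extra check that $\alpha>0$ (so that $\alpha\in(0,1)$ as Lemma \ref{gas0} requires) is a small point the paper leaves implicit, but otherwise the arguments coincide.
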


\begin{proof}
If $(\xi_{0},\xi_{1},\ldots,\xi_{k})\in\mathbb{X}^{k+1}$ then by the triangle
inequality, (\ref{ba}) and (\ref{slg})%
\begin{align*}
\left\vert \sum_{i=0}^{k}a_{i}\xi_{i}+g_{n}\left(  \sum_{i=0}^{k}b_{i}\xi
_{i}\right)  \right\vert  &  \leq\sum_{i=0}^{k}(|a_{i}|+\sigma|b_{i}|)|\xi
_{i}|\\
&  \leq\left[  \sum_{i=0}^{k}(|a_{i}|+\sigma|b_{i}|)\right]  \max\{|\xi
_{0}|,\ldots,|\xi_{k}|\}
\end{align*}

Therefore, given (\ref{as}), by Lemma \ref{gas0} the origin is globally
asymptotically stable.
\end{proof}

\medskip

Condition (\ref{slg}) implies that the origin is a fixed point of (\ref{gla})
since it implies that $g_{n}(0)=0$ for all $n$. Except for this restriction,
the functions $g_{n}$ are completely arbitrary. Examples of familiar real
functions $g_{n}$ that satisfy (\ref{slg}) include $\sin t$, $\tan^{-1}t$, and
$\tanh t;$ for instance, $\tanh t$ is used in \cite{Ham}. Of course, $g_{n}$
need not be bounded; e.g., consider $t^{3}/(1+t^{2}).$

\section{Reduction of order}

Under certain conditions a special, \textit{order-reducing} change of
variables splits or factors Equation (\ref{gla}) into a triangular system of
two equations of lower order; see \cite{bk2}, Theorem 5.6. The next lemma
extends that result from fields to algebras.

\begin{lemma}
\label{fsor}Let $g_{n}:\mathbb{X}\rightarrow\mathbb{X}$ be a sequence of
functions on an algebra $\mathbb{X}$ with identity (not necessarily
normed) over a field $\mathcal{F}$. If for $a_{i},b_{i}\in\mathbb{X}$ the
polynomials%
\[
P(\xi)=\xi^{k+1}-\sum_{i=0}^{k}a_{i}\xi^{k-i},\quad Q(\xi)=\sum_{i=0}^{k}%
b_{i}\xi^{k-i}%
\]
have a common root $\rho\in\mathcal{G}$, the group of units of $\mathbb{X}$,
then each solution $\{x_{n}\}$ of (\ref{gla}) in $\mathbb{X}$ satisfies%
\begin{equation}
x_{n+1}=\rho x_{n}+t_{n+1} \label{cfe}%
\end{equation}
where the sequence $\{t_{n}\}$ is the unique solution of the equation:%
\begin{equation}
t_{n+1}=-\sum_{i=0}^{k-1}p_{i}t_{n-i}+g_{n}\left(  \sum_{i=0}^{k-1}%
q_{i}t_{n-i}\right)  \label{fe}%
\end{equation}
in $\mathbb{X}$ with initial values $t_{-i}=x_{-i}-\rho x_{-i-1}\in\mathbb{X}$
for $i=0,1,\ldots,k-1$ and coefficients%
\[
p_{i}=\rho^{i+1}-a_{0}\rho^{i}-\cdots-a_{i}\quad\text{and\quad}q_{i}=b_{0}%
\rho^{i}+b_{1}\rho^{i-1}+\cdots+b_{i}%
\]
in $\mathbb{X}.$ Conversely, if $\{t_{n}\}$ is a solution of (\ref{fe}) with
initial values $t_{-i}\in\mathbb{X}$ then the sequence $\{x_{n}\}$ that it
generates in $\mathbb{X}$ via (\ref{cfe}) is a solution of (\ref{gla}).
\end{lemma}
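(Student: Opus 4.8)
The plan is to verify the factorization directly by algebraic substitution, exactly as one does over a field, checking that every cancellation that worked there still works when the coefficients live in a (possibly noncommutative) algebra $\mathbb{X}$ rather than in $\mathcal{F}$. The key point to keep track of is that $\rho\in\mathcal{G}$ and the $p_i,q_i$ are defined as \emph{left} multiples of powers of $\rho$ by the $a_i,b_i$ (reading the displayed formulas for $p_i,q_i$), so all manipulations must respect the side on which one multiplies; since $P,Q$ are polynomials in the single element $\rho$, the subalgebra generated by $\rho$ is commutative, and that is what makes the telescoping identities go through.

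First I would establish the forward direction. Given a solution $\{x_n\}$ of (\ref{gla}), define $t_n = x_n - \rho x_{n-1}$ for all $n$; this is (\ref{cfe}) by construction, and it fixes the initial values $t_{-i}=x_{-i}-\rho x_{-i-1}$. It remains to show this $\{t_n\}$ satisfies (\ref{fe}). Substitute $x_{n-i} = t_{n-i} + \rho x_{n-i-1}$ and iterate, so that each $x_{n-i}$ is rewritten as a sum $\sum_{j\ge 0}\rho^{j} t_{n-i-j}$ plus a tail term $\rho^{m}x_{n-i-m}$; one does this uniformly down to index $n-k$. Plugging these expansions into $\sum_{i=0}^k a_i x_{n-i}$ and into $\sum_{i=0}^k b_i x_{n-i}$ and collecting the coefficient of each $t_{n-j}$ for $j=0,\dots,k-1$, the coefficient multiplying $t_{n-j}$ in the first sum is exactly $a_0\rho^{j}+a_1\rho^{j-1}+\cdots+a_j$, and the tail collapses because $\rho$ is a root of $P$, i.e. $\rho^{k+1}=\sum_{i=0}^k a_i\rho^{k-i}$; similarly the coefficient of $t_{n-j}$ in the argument of $g_n$ is $b_0\rho^{j}+\cdots+b_j=q_j$, with the tail vanishing because $Q(\rho)=0$. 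Matching these against $x_{n+1}=\rho x_n + t_{n+1}$ and solving for $t_{n+1}$ yields $t_{n+1} = \sum_{i=0}^k a_i x_{n-i} + g_n(\cdots) - \rho x_n$, and after the bookkeeping above this is precisely $-\sum_{i=0}^{k-1}p_i t_{n-i} + g_n(\sum_{i=0}^{k-1}q_i t_{n-i})$ once one checks $p_i = \rho^{i+1}-a_0\rho^i-\cdots-a_i$, i.e. $\rho\cdot(\text{coefficient of }t_{n-i}) - (\text{next coefficient})$ telescopes to $p_i$; this last identity is just the recursion $p_i = \rho p_{i-1} - a_i$ with $p_{-1}=0$, which reproduces the stated closed form. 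Uniqueness of $\{t_n\}$ with those initial values is immediate since (\ref{fe}) is itself a $k$-th order recursion that determines $t_{n+1}$ from the previous $k$ terms.

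For the converse, I would run the same computation in reverse: start from any solution $\{t_n\}$ of (\ref{fe}), generate $\{x_n\}$ by (\ref{cfe}) with any lift of the initial values, and substitute $x_n = \rho x_{n-1}+t_n$ backwards into the candidate identity (\ref{gla}). Using $t_{n-i}=x_{n-i}-\rho x_{n-i-1}$ and the definitions of $p_i,q_i$, the sums $\sum p_i t_{n-i}$ and $\sum q_i t_{n-i}$ telescope — again by the relations $P(\rho)=Q(\rho)=0$ — back into $\rho x_n - \sum a_i x_{n-i}$ and $\sum b_i x_{n-i}$ respectively, so that (\ref{fe}) rearranges to exactly $x_{n+1}=\sum a_i x_{n-i}+g_n(\sum b_i x_{n-i})$. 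The main obstacle is purely clerical rather than conceptual: organizing the double sum so the telescoping is transparent and making sure at each step that $\rho$ is multiplied on the correct side and that invertibility of $\rho$ (needed only to guarantee $p_i,q_i$ and the change of variables behave well, and that $\mathcal{G}$ is where $\rho$ must live for the statement to be meaningful) is actually used where claimed; since everything happens inside the commutative subalgebra generated by $\rho$ together with the fixed $a_i,b_i$, no genuinely noncommutative difficulty arises, and the field proof of \cite{bk2}, Theorem 5.6 transcribes essentially verbatim.
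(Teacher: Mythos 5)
Your direct-substitution argument is correct, but it is a genuinely different route from the paper's. The paper works through the semiconjugate-factorization framework of \cite{arx} and \cite{bk2}: it introduces auxiliary variables $\zeta_{j}$ built from negative powers $(\rho^{-1})^{j}$, shows that $f_{n}(\xi_{0},\zeta_{1},\ldots,\zeta_{k})-\gamma\xi_{0}$ is independent of $\xi_{0}$ if and only if $\gamma$ is a common root of $P$ and $Q$ (so the common-root hypothesis is exhibited as necessary and sufficient for this kind of order reduction, not merely sufficient), and then extracts $p_{i},q_{i}$ from sums of the form $\sum_{j\geq i}a_{j}(\rho^{-1})^{j-i+1}$. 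You instead expand $x_{n-i}=\sum_{j=0}^{k-i-1}\rho^{j}t_{n-i-j}+\rho^{k-i}x_{n-k}$, collect the coefficient of each $t_{n-m}$, and let the tails cancel via $P(\rho)=Q(\rho)=0$; the converse is the reverse telescoping. This is more elementary and self-contained, and it buys something the paper's proof does not: only nonnegative powers of $\rho$ appear, so your forward computation never actually uses invertibility of $\rho$, whereas the paper's argument relies on $\rho^{-1}$ throughout. What you lose is the paper's characterization of \emph{when} such a reduction is possible, which motivates the common-root condition rather than merely verifying it.

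Two small inaccuracies, neither fatal. First, in a noncommutative algebra the coefficient recursion is $p_{i}=p_{i-1}\rho-a_{i}$ and $q_{i}=q_{i-1}\rho+b_{i}$, with $\rho$ multiplying on the \emph{right} (legitimate because $p_{i-1},q_{i-1}$ are sums of terms ending in powers of $\rho$); your stated form $p_{i}=\rho p_{i-1}-a_{i}$ would require $\rho$ to commute with the $a_{j}$. Second, the subalgebra generated by $\rho$ together with the $a_{i},b_{i}$ is not commutative in general, contrary to your closing remark. Fortunately your main computation needs neither claim: by direct collection the coefficient of $t_{n-m}$ in $\sum_{i}a_{i}x_{n-i}$ is $a_{0}\rho^{m}+\cdots+a_{m}$, and subtracting this from the coefficient $\rho^{m+1}$ of $t_{n-m}$ in $\rho x_{n}$ gives exactly $p_{m}$ with no commutation of $\rho$ past any $a_{j}$.
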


\begin{proof}
Define the functions $f_{n}$ as in (\ref{fn}) and for every $\xi_{0}%
,v_{1},\ldots,v_{k}$ in $\mathbb{X}$, define $\zeta_{0}=\xi_{0}$ and for
$j=1,\ldots,k$ and fixed $\gamma\in\mathcal{G}$ define%
\[
\zeta_{j}=(\gamma^{-1})^{j}\xi_{0}+\sum_{i=1}^{j}(\gamma^{-1})^{j-i+1}v_{i}.
\]

\noindent Now the change of variables
\begin{equation}
t_{n}=x_{n}-\gamma x_{n-1} \label{cfe0}%
\end{equation}

\noindent in Equation (\ref{gla}) reduces its order by one if and only if the
quantity%
\[
f_{n}(\xi_{0},\zeta_{1},\ldots,\zeta_{k})-\gamma\xi_{0}%
\]
is independent of $\xi_{0}$ (\cite{arx} or \cite{bk2}, Theorem 5.1). In this
case, the above quantity defines a sequence of functions $\phi_{n}%
(v_{1},\ldots,v_{k})$ of $k$ variables that yields a difference equation of
order $k$ (\cite{arx} or \cite{bk2}, Section 5.5). Now, by straightforward
calculation
\begin{align*}
f_{n}(\xi_{0},\zeta_{1},\ldots,\zeta_{k})-\gamma\xi_{0}  &  =(a_{0}-\gamma
)\xi_{0}+\sum_{j=1}^{k}a_{j}\left(  (\gamma^{-1})^{j}\xi_{0}-\sum_{i=1}%
^{j}(\gamma^{-1})^{j-i+1}v_{i}\right)  +\\
&  \qquad+g_{n}\left(  b_{0}\xi_{0}+\sum_{j=1}^{k}b_{j}\left[  (\gamma
^{-1})^{j}\xi_{0}-\sum_{i=1}^{j}(\gamma^{-1})^{j-i+1}v_{i}\right]  \right) \\
&  =\left[  a_{0}-\gamma+\sum_{j=1}^{k}a_{j}(\gamma^{-1})^{j}\right]  \xi
_{0}-\sum_{j=1}^{k}a_{j}\sum_{i=1}^{j}(\gamma^{-1})^{j-i+1}v_{i}+\\
&  \qquad+g_{n}\left(  \left[  b_{0}+\sum_{j=1}^{k}b_{j}(\gamma^{-1}%
)^{j}\right]  \xi_{0}-\sum_{j=1}^{k}b_{j}\sum_{i=1}^{j}(\gamma^{-1}%
)^{j-i+1}v_{i}\right)  .
\end{align*}

The last expression above is independent of $\xi_{0}$ (for arbitrary $\xi_{0}%
$) if and only if $\gamma$ can be chosen such that
\[
a_{0}-\gamma+\sum_{j=1}^{k}a_{j}(\gamma^{-1})^{j}=0\quad\text{and\quad}%
b_{0}+\sum_{j=1}^{k}b_{j}(\gamma^{-1})^{j}=0.
\]

Multiplying the two equalities above on the right by $\gamma^{k}$ yields%
\begin{align*}
0  &  =a_{0}\gamma^{k}-\gamma^{k+1}+\sum_{j=1}^{k}a_{j}(\gamma^{-1})^{j}%
\gamma^{k}=P(\gamma)\\
0  &  =b_{0}\gamma^{k}+\sum_{j=1}^{k}b_{j}(\gamma^{-1})^{j}\gamma^{k}%
=Q(\gamma)
\end{align*}

so that $\gamma$ must be a common root of the polynomials $P$ and $Q.$

Now, let $\gamma=\rho$ be a common root of $P$ and $Q$ in $\mathcal{G}$ and
define the aforementioned functions $\phi_{n}$ as%
\begin{align*}
\phi_{n}(v_{1},\ldots,v_{k})  &  =f_{n}(\xi_{0},\zeta_{1},\ldots,\zeta
_{k})-\rho\xi_{0}\\
&  =-\sum_{j=1}^{k}a_{j}\sum_{i=1}^{j}(\rho^{-1})^{j-i+1}v_{i}+g_{n}\left(
-\sum_{j=1}^{k}b_{j}\sum_{i=1}^{j}(\rho^{-1})^{j-i+1}v_{i}\right) \\
&  =-\sum_{i=1}^{k}\sum_{j=i}^{k}a_{j}(\rho^{-1})^{j-i+1}v_{i}+g_{n}\left(
-\sum_{i=1}^{k}\sum_{j=i}^{k}b_{j}(\rho^{-1})^{j-i+1}v_{i}\right)  .
\end{align*}

For each $i=1,\cdots,k,$ since $\rho$ is a root of the polynomial $P$ it
follows that%
\begin{align*}
\sum_{j=i}^{k}a_{j}(\rho^{-1})^{j-i+1}  &  =\left(  a_{i}\rho^{k-i}%
+a_{i+1}\rho^{k-i-1}+\cdots+a_{k-1}\rho+a_{k}\right)  (\rho^{-1})^{k-i+1}\\
&  =(\rho^{k+1}-a_{0}\rho^{k}-\cdots-a_{i-1}\rho^{k-i+1})(\rho^{-1})^{k-i+1}\\
&  =\rho^{i}-a_{0}\rho^{i-1}-\cdots-a_{i-1}%
\end{align*}

Similarly, since $\rho$ is also a root of the polynomial $Q$ it follows that%
\begin{align*}
\sum_{j=i}^{k}b_{j}(\rho^{-1})^{j-i+1}  &  =\left(  b_{i}\rho^{k-i}%
+b_{i+1}\rho^{k-i-1}+\cdots+b_{k-1}\rho+b_{k}\right)  (\rho^{-1})^{k-i+1}\\
&  =\left(  -b_{0}\rho^{k}-b_{1}\rho^{k-1}-\cdots-b_{i-1}\rho^{k-i+1}\right)
(\rho^{-1})^{k-i+1}\\
&  =-b_{0}\rho^{i-1}-b_{1}\rho^{i-2}-\cdots-b_{i-1}.
\end{align*}

Now, if the quantities $p_{i}$ and $q_{i}$ are defined as in the statement of
this Lemma\textit{ }then the preceding calculations show that%
\[
\sum_{j=i}^{k}a_{j}(\rho^{-1})^{j-i+1}=p_{i-1}\text{ and }\sum_{j=i}^{k}%
b_{j}(\rho^{-1})^{j-i+1}=-q_{i-1}.
\]

Using these quantities the functions $\phi_{n}$ are determined as follows%
\[
\phi_{n}(v_{1},\ldots,v_{k})=-\sum_{i=1}^{k}p_{i-1}v_{i}+g_{n}\left(
\sum_{i=1}^{k}q_{i-1}v_{i}\right)  .
\]

Identifying $v_{i}$ with the new variables $t_{n-i+1}$ yields a difference
equation of order $k$ as follows:%
\begin{align*}
t_{n+1}  &  =\phi_{n}(t_{n},\ldots,t_{n-k+1})\\
&  =-\sum_{i=1}^{k}p_{i-1}t_{n-i+1}+g_{n}\left(  \sum_{i=1}^{k}q_{i-1}%
t_{n-i+1}\right) \\
&  =-\sum_{i=0}^{k-1}p_{i}t_{n-i}+g_{n}\left(  \sum_{i=0}^{k-1}q_{i}%
t_{n-i}\right)
\end{align*}
which is Equation (\ref{fe}). From (\ref{cfe0}) we obtain (\ref{cfe}) and the
proof is complete. \end{proof}

\medskip

\textbf{Remarks}. 1. The preceding result shows that Equation (\ref{gla})
splits into the equivalent pair of equations (\ref{cfe}) and (\ref{fe}) via
the change of variables (\ref{cfe0}) provided that the polynomials $P$ and $Q$
have a common nonzero root $\rho$ in the group of units of $\mathbb{X}$.
Equation (\ref{fe}), which is of the same type as (\ref{gla}) but with order
reduced by one is the\textit{ factor equation} of (\ref{gla}). Equation
(\ref{cfe}) which bridges the order (or dimension) gap between (\ref{gla}) and
(\ref{fe}) is the \textit{cofactor equation}.

2. In the special case where $b_{i}=0$ for all $i$ (\ref{gla}) reduces to the
linear non-homogeneous difference equation%
\begin{equation}
x_{n+1}=\sum_{i=0}^{k}a_{i}x_{n-i}+g_{n}(0) \label{glal}%
\end{equation}
with constant coefficients. Since in this case $Q$ is just the zero
polynomial, Lemma \ref{fsor} is applicable with $\rho$ being any root of $P$
in $\mathcal{G}$. So, as might be expected, when $\mathbb{X=R}$ the reduction
of order is possible if the homogeneous part of (\ref{glal}) has nonzero
eigenvalues. The reduction of order of general linear equations
(non-homogeneous, non-autonomous) on arbitrary fields is discussed in
\cite{bk2}, Chapter 7.

3. Solution of polynomials by factorization is problematic in a general Banach
algebra $\mathbb{X}$ due to limitations of the cancellation law. Requiring all
nonzero elements of $\mathbb{X}$ to be units reduces $\mathbb{X}$ to either
$\mathbb{R}$ or $\mathbb{C}$ in commutative cases and to quaternions in
non-commutative cases; see (\cite{Wil}). For general Banach algebras it
is possible to determine, for special cases of (\ref{gla}), whether $P$ and
$Q$ have a common root in $\mathcal{G}$; see the two corollaries in the next section.

4. Equation (\ref{fe}) preserves another aspect of (\ref{gla}). If
$a_{i},b_{i}\in\{0\}\cup\mathcal{G}$ and $P$ and $Q$ have a common root $\rho$
in $\mathcal{G}$ then the numbers $p_{i},q_{i}$ in Lemma \ref{fsor} are also
units (or zero).

\section{Extending the ranges of parameters}

The solution of Equation (\ref{cfe}) in terms of $t_{n}$ is%
\begin{equation}
x_{n}=\rho^{n}x_{0}+\sum_{j=1}^{n}\rho^{n-j}t_{j} \label{cs}%
\end{equation}

This formula may be used to translate various properties of a solution
$\{t_{n}\}$ of (\ref{fe}) into corresponding properties of the solution
$\{x_{n}\}$ of (\ref{gla}). This is done for equation (\ref{sed1}) in
\cite{SKy} for $\mathbb{X=R}$. Doing the same for (\ref{gla}) more generally
yields the following natural consequence of combining Lemmas \ref{gas3} and
\ref{fsor}.

\begin{theorem}
\label{gas2}Let $g_{n}:\mathbb{X}\rightarrow\mathbb{X}$ be a sequence of
functions that satisfy (\ref{slg}) for each $n$. Then every solution
$\{x_{n}\}$ of (\ref{gla}) converges to zero if either (a) or (b) below is true:

(a) Inequality (\ref{as}) holds;

(b) The polynomials $P,Q$ in Lemma \ref{fsor} have a common root $\rho
\in\mathcal{G}$ such that $|\rho|<1$ and
\begin{equation}
\sum_{i=0}^{k-1}(|p_{i}|+\sigma|q_{i}|)<1 \label{spq}%
\end{equation}
with the coefficients $p_{i},q_{i}$ in the factor equation (\ref{fe}),
$i=0,1,\ldots,k-1$.
\end{theorem}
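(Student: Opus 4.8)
The plan is to dispose of case (a) immediately and to reduce case (b) to Lemmas \ref{fsor} and \ref{gas3} together with the closed-form solution (\ref{cs}) of the cofactor equation. For (a): if (\ref{as}) holds then Lemma \ref{gas3} applies verbatim to (\ref{gla}) and yields $|x_n|\le\alpha^{n/(k+1)}\max\{|x_0|,|x_{-1}|,\ldots,|x_{-k}|\}$ with $\alpha=\sum_{i=0}^{k}(|a_i|+\sigma|b_i|)<1$, so $x_n\to0$.

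For case (b) I would first invoke Lemma \ref{fsor} with the common root $\rho\in\mathcal G$, $|\rho|<1$: every solution $\{x_n\}$ of (\ref{gla}) satisfies the cofactor equation (\ref{cfe}), $x_{n+1}=\rho x_n+t_{n+1}$, where $\{t_n\}$ is the solution of the factor equation (\ref{fe}) with initial values $t_{-i}=x_{-i}-\rho x_{-i-1}$, $i=0,\ldots,k-1$. Now (\ref{fe}) has exactly the form of (\ref{gla}) with $k$ replaced by $k-1$, coefficients $-p_i,q_i$ in place of $a_i,b_i$, and the same maps $g_n$, which still satisfy (\ref{slg}); since $|{-p_i}|=|p_i|$, hypothesis (\ref{spq}) says precisely that $\sum_{i=0}^{k-1}(|{-p_i}|+\sigma|q_i|)<1$. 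Hence Lemma \ref{gas3} applies to (\ref{fe}) --- or, to sidestep the non-degeneracy convention attached to (\ref{gla}), one may apply Lemma \ref{gas0} directly to the right-hand side of (\ref{fe}), which satisfies (\ref{lwc}) with $\alpha=\beta$ and with $k$ replaced by $k-1$. Either way,
\[
|t_n|\le\beta^{n/k}M,\qquad\beta:=\sum_{i=0}^{k-1}(|p_i|+\sigma|q_i|)<1,\quad M:=\max\{|t_0|,|t_{-1}|,\ldots,|t_{-(k-1)}|\},
\]
for all $n\ge1$, and in particular $t_n\to0$.

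It then remains to transport this decay to $\{x_n\}$ via (\ref{cs}), $x_n=\rho^nx_0+\sum_{j=1}^{n}\rho^{n-j}t_j$. The submultiplicativity (\ref{ba}) gives $|\rho^m|\le|\rho|^m$ for all $m\ge0$, so writing $r:=|\rho|<1$ and $s:=\beta^{1/k}<1$,
\[
|x_n|\le r^n|x_0|+\sum_{j=1}^{n}|\rho|^{n-j}|t_j|\le r^n|x_0|+M\sum_{j=1}^{n}r^{n-j}s^j\le r^n|x_0|+Mn(\max\{r,s\})^n,
\]
using $r^{n-j}s^j\le(\max\{r,s\})^n$ for $1\le j\le n$; the right-hand side tends to $0$, whence $x_n\to0$ and (b) is proved. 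The only step that is not a bare citation is this last estimate --- recognizing the sum in (\ref{cs}) as a convolution of the exponentially small sequence $\{t_n\}$ with the kernel $\rho^{n-j}$ and bounding it crudely --- and even there exponential decay is a convenience rather than a necessity, since $t_n\to0$ together with $|\rho|<1$ already forces the sum to $0$ by a standard Cesàro-type argument; Lemma \ref{gas3} simply hands us the sharper bound for free. The one place where the algebra structure genuinely enters, beyond Lemma \ref{fsor} itself, is the inequality $|\rho^n|\le|\rho|^n$.
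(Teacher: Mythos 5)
Your proof is correct and follows essentially the same route as the paper: case (a) by Lemma \ref{gas3}, and case (b) by combining Lemma \ref{fsor} with Lemma \ref{gas3} applied to the factor equation and then estimating the convolution sum in (\ref{cs}). The only (harmless) difference is at the very end, where the paper sums the geometric series exactly and splits into the cases $\alpha^{1/(k+1)}\neq|\rho|$ and $\alpha^{1/(k+1)}=|\rho|$, whereas your cruder bound $r^{n-j}s^{j}\le(\max\{r,s\})^{n}$ handles both at once.
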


\begin{proof}
(a) Convergence in this case is an immediate consequence of Lemma \ref{gas3}.

(b) By an application of Lemma \ref{fsor} we obtain (\ref{fe}). Then, given
(\ref{spq}), an application of Lemma \ref{gas3} to (\ref{fe}) implies that
\[
|t_{n}|\leq\alpha^{n/(k+1)}\mu
\]

\noindent where $\mu=\max\{|t_{0}|,|t_{-1}|,\ldots,|t_{-k+1}|\}$ with
$t_{-i}=x_{-i}-\rho x_{-i-1}$ for $i=0,1,\ldots,k-1$ and%
\[
\alpha=\sum_{i=0}^{k-1}(|p_{i}|+\sigma|q_{i}|),\quad
\]

\noindent with $p_{i},q_{i}$ as given in Lemma \ref{fsor}. Since $|\rho
^{j}|\leq|\rho|^{j}$ for each $j$, taking norms in (\ref{cs}) yields%
\begin{equation}
|x_{n}|\leq|\rho|^{n}|x_{0}|+\sum_{j=1}^{n}|\rho|^{n-j}|t_{j}|\leq|\rho
|^{n}|x_{0}|+\mu|\rho|^{n}\sum_{j=0}^{n-1}\left(  \frac{\alpha^{1/(k+1)}%
}{|\rho|}\right)  ^{j}. \label{ms}%
\end{equation}

If $\alpha^{1/(k+1)}\not =|\rho|$ then%
\begin{align*}
|x_{n}|  &  \leq|\rho|^{n}|x_{0}|+\mu\alpha^{1/(k+1)}|\rho|^{n-1}\frac
{[\alpha^{1/(k+1)}/|\rho|]^{n}-1}{[\alpha^{1/(k+1)}/|\rho|]-1}\\
&  =|\rho|^{n}|x_{0}|+\mu\alpha^{1/(k+1)}\frac{\alpha^{n/(k+1)}-|\rho|^{n}%
}{\alpha^{1/(k+1)}-|\rho|}%
\end{align*}

Since $\alpha,|\rho|<1$ it follows that $\{x_{n}\}$ converges to zero. If
$\alpha^{1/(k+1)}=|\rho|$ then (\ref{ms}) reduces to%
\[
|x_{n}|\leq|\rho|^{n}|x_{0}|+\mu|\rho|^{n}n
\]

\noindent and by L'Hospital's rule $\{x_{n}\}$ again converges to zero.
\end{proof}

\begin{corollary}
\label{gasc1}Let $g_{n}$ be functions on $\mathbb{X}$ satisfying (\ref{slg})
for all $n\geq0.$ Every solution of the difference equation
\begin{align}
x_{n+1}  &  =ax_{n}+g_{n}\left(  b_{0}x_{n}+b_{1}x_{n-1}+\cdots+b_{k}%
x_{n-k}\right)  ,\label{gla0}\\
a  &  \in\mathcal{G},b_{i}\in\mathbb{X},\ b_{k}\not =0\nonumber
\end{align}
converges to zero if $|a|<1$ and the following conditions hold:
\begin{align}
b_{0}a^{k}+b_{1}a^{k-1}+b_{2}a^{k-2}+\cdots+b_{k}  &  =0\text{, }\label{cro}\\
\sum_{i=0}^{k-1}|b_{0}a^{i}+b_{1}a^{i-1}+\cdots+b_{i}|  &  <\frac{1}{\sigma}.
\label{cre}%
\end{align}

\end{corollary}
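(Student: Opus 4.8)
The plan is to recognize (\ref{gla0}) as the special case of (\ref{gla}) in which $a_0=a$ and $a_1=a_2=\cdots=a_k=0$, and then to apply part (b) of Theorem \ref{gas2}. The standing hypothesis ``$a_k\neq0$ or $b_k\neq0$'' for (\ref{gla}) holds here because $b_k\neq0$ is assumed, and the functions $g_n$ satisfy (\ref{slg}) by hypothesis, so the only things left to verify are the two conditions appearing in Theorem \ref{gas2}(b): existence of a common root $\rho\in\mathcal{G}$ of $P$ and $Q$ with $|\rho|<1$, and inequality (\ref{spq}).

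First I would write down the polynomials of Lemma \ref{fsor} for this special case. Since $a_0=a$ and the remaining $a_i$ vanish,
\[
P(\xi)=\xi^{k+1}-a\xi^{k}=\xi^{k}(\xi-a),\qquad Q(\xi)=b_0\xi^{k}+b_1\xi^{k-1}+\cdots+b_k.
\]
The element $\rho=a$ is a root of $P$; it lies in $\mathcal{G}$ by assumption and satisfies $|\rho|=|a|<1$. Condition (\ref{cro}) is precisely the statement $Q(a)=0$, so $\rho=a$ is a common root of $P$ and $Q$ in the group of units, which is exactly the first requirement of Theorem \ref{gas2}(b).

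Next I would compute the coefficients $p_i,q_i$ of the factor equation (\ref{fe}) with $\rho=a$. From $p_i=\rho^{i+1}-a_0\rho^{i}-\cdots-a_i$ and the vanishing of $a_1,\dots,a_k$, one gets $p_i=a^{i+1}-a\cdot a^{i}=0$ for every $i=0,1,\dots,k-1$, so the factor equation has no linear part. From $q_i=b_0\rho^{i}+b_1\rho^{i-1}+\cdots+b_i$ one gets $q_i=b_0a^{i}+b_1a^{i-1}+\cdots+b_i$. Hence the left side of (\ref{spq}) collapses to $\sum_{i=0}^{k-1}\sigma|q_i|=\sigma\sum_{i=0}^{k-1}|b_0a^{i}+\cdots+b_i|$, and (\ref{cre}) says exactly that this quantity is less than $1$. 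With all hypotheses of Theorem \ref{gas2}(b) verified, every solution of (\ref{gla0}) converges to zero.

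I do not expect a genuine obstacle here beyond the bookkeeping: the entire content is the observation that the linear part $P$ of (\ref{gla0}) already carries the factor $\xi-a$, so $a$ is automatically available as the order-reducing root, and this choice makes all the $p_i$ telescope to $0$. The only point worth a second glance is that $\xi^{k}(\xi-a)$ could in principle have roots other than $0$ and $a$ in a noncommutative algebra; but uniqueness is irrelevant here, since the existence of the single common unit root $\rho=a$ is all that Theorem \ref{gas2}(b) requires.
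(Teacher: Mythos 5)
Your proof is correct and follows essentially the same route as the paper: identify $\rho=a$ as the common unit root of $P(\xi)=\xi^{k+1}-a\xi^{k}$ and $Q$ under (\ref{cro}), observe that all $p_i$ vanish while $q_i=b_0a^i+\cdots+b_i$, and invoke Theorem \ref{gas2}(b) with (\ref{spq}) reducing to (\ref{cre}). The only cosmetic caveat is that the factorization $P(\xi)=\xi^{k}(\xi-a)$ need not hold identically in a noncommutative algebra, but this is immaterial since you only need $P(a)=a^{k+1}-a\cdot a^{k}=0$, which is immediate.
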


\begin{proof}
For equation (\ref{gla0}) the polynomials $P,Q$ are%
\[
P(\xi)=\xi^{k+1}-a\xi^{k},\quad Q(\xi)=b_{0}\xi^{k}+b_{1}\xi^{k-1}%
+\cdots+b_{k}.
\]

Thus $\rho=a$ is their common root in $\mathcal{G}$ if (\ref{cro}) holds. The
numbers $p_{i},q_{i}$ that define the factor equation (\ref{fe}) in this case
are%
\[
p_{i}=\rho^{i+1}-a\rho^{i}=0,\quad q_{i}=b_{0}a^{i}+b_{1}a^{i-1}+\cdots+b_{i}%
\]

\noindent Thus, if $|a|<1$ then by Theorem \ref{gas2} every solution of
(\ref{gla0}) converges to zero.
\end{proof}

\medskip

\textbf{Remark.} The parameter range determined by (\ref{spq}) is generally
distinct from that given by (\ref{as}); hence, Theorem \ref{gas2} or Corollary
\ref{gasc1} may imply convergence to 0 when Lemma \ref{gas3} does not apply
and the unfolding map is not a contraction. To illustrate, consider the
following equation on the set of real numbers:
\begin{equation}
x_{n+1}=ax_{n}+\alpha_{n}\tanh\left(  x_{n}-bx_{n-k}\right)  \label{dham}%
\end{equation}
which is a non-autonomous version of a type of equation discussed in
\cite{Ham}. Suppose that the sequence $\{\alpha_{n}\}$ of real numbers is
bounded by $\sigma>0$ and it is otherwise arbitrary. Then
\[
|\alpha_{n}\tanh t|=|\alpha_{n}||\tanh t|\leq|\alpha_{n}||t|\leq\sigma|t|
\]
for all $n$ and (\ref{slg}) holds. If $0<|a|<1$ and $b=a^{k}$ then by
Corollary \ref{gasc1} every solution of (\ref{dham}) converges to zero if
\[
\frac{1}{\sigma}>\sum_{i=0}^{k-1}|a|^{i}=\frac{1-|a|^{k}}{1-|a|}%
\]
i.e., if
\begin{equation}
\sigma<\frac{1-|a|}{1-|a|^{k}}. \label{er}%
\end{equation}

On the other hand, applying Lemma \ref{gas3} to (\ref{dham}) with $b=a^{k}$
produces the range%
\[
|a|+\sigma(1+|a|^{k})<1\Rightarrow\sigma<\frac{1-|a|}{1+|a|^{k}}%
\]
which is clearly more restricted than the one given by (\ref{er}). Note that
$a$ and $\sigma$ may satisfy (\ref{er}) but with
\[
|a|+\sigma(1+|a|^{k})>1.
\]

\medskip

\begin{corollary}
\label{gasc2}Let $g_{n}$ be functions on $\mathbb{X}$ satisfying (\ref{slg})
for all $n\geq0.$ For the difference equation%
\begin{align}
x_{n+1}  &  =a_{0}x_{n}+a_{1}x_{n-1}+\cdots+a_{k}x_{n-k}+g_{n}\left(
x_{n}-bx_{n-1}\right)  ,\label{gla1}\\
a_{i}  &  \in\mathbb{X},b\in\mathcal{G},\ a_{k}\not =0\nonumber
\end{align}
assume that $|b|<1$ and the following conditions hold:%
\begin{gather}
a_{0}b^{k}+a_{1}b^{k-1}+\cdots+a_{k}=b^{k+1},\label{cro1}\\
\sum_{i=0}^{k-1}|b^{i+1}-a_{0}b^{i}-\cdots-a_{i}|<1-\sigma. \label{cre1}%
\end{gather}
Then every solution of (\ref{gla1}) converges to zero.
\end{corollary}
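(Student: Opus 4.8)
The plan is to recognize Equation (\ref{gla1}) as the special case of (\ref{gla}) in which $b_0 = 1$, $b_1 = -b$ and $b_i = 0$ for $2 \le i \le k$, and then to check that all the hypotheses of part (b) of Theorem \ref{gas2} are met with the common root $\rho = b$. First I would write down the associated polynomial $Q$: since only $b_0$ and $b_1$ are nonzero,
\[
Q(\xi) = b_0\xi^k + b_1\xi^{k-1} = \xi^k - b\xi^{k-1} = \xi^{k-1}(\xi - b),
\]
so $\rho = b$ is a root of $Q$, and by hypothesis $b \in \mathcal{G}$, the group of units. For $\rho = b$ to be a \emph{common} root of $P$ and $Q$ one needs $P(b) = b^{k+1} - \sum_{i=0}^{k} a_i b^{k-i} = 0$, which upon rearrangement is exactly condition (\ref{cro1}). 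Hence Lemma \ref{fsor} applies and (\ref{gla1}) splits into the cofactor equation (\ref{cfe}) and a factor equation (\ref{fe}) of order $k$.

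Next I would compute the coefficients of that factor equation from the formulas in Lemma \ref{fsor}. The $p_i$ are $p_i = b^{i+1} - a_0 b^i - \cdots - a_i$, which is precisely the expression whose norm appears in (\ref{cre1}). For the $q_i$, using $b_0 = 1$, $b_1 = -b$ and $b_j = 0$ for $j \ge 2$, one gets $q_0 = b_0 = 1$ and, for $1 \le i \le k-1$,
\[
q_i = b_0 b^i + b_1 b^{i-1} = b^i - b \cdot b^{i-1} = 0 .
\]
So all of the ``$q$-mass'' collapses onto $q_0 = 1$. Substituting into the key inequality (\ref{spq}) of Theorem \ref{gas2}(b),
\[
\sum_{i=0}^{k-1}\bigl(|p_i| + \sigma|q_i|\bigr) = \sum_{i=0}^{k-1}|p_i| + \sigma ,
\]
so (\ref{spq}) reads $\sum_{i=0}^{k-1}|p_i| < 1 - \sigma$, which is exactly (\ref{cre1}). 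Together with the standing assumption $|\rho| = |b| < 1$, every hypothesis of Theorem \ref{gas2}(b) holds, and the conclusion that each solution of (\ref{gla1}) converges to zero follows immediately.

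This argument is in essence a bookkeeping exercise, and I do not anticipate a genuine obstacle. The only step that deserves care --- and the only place where the particular form of the argument $x_n - b x_{n-1}$ is used --- is the cancellation forcing $q_i = 0$ for $i \ge 1$; this is what keeps the $\sigma$-term in (\ref{spq}) equal to $\sigma$ rather than a larger quantity, and hence makes (\ref{cre1}) the exact image of (\ref{spq}) under the order reduction.
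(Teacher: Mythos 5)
Your proposal is correct and follows essentially the same route as the paper's own proof: identify $\rho=b$ as the common root of $P$ and $Q$ via (\ref{cro1}), compute $p_i=b^{i+1}-a_0b^i-\cdots-a_i$, $q_0=1$, $q_i=0$ for $i\geq1$, and read off (\ref{cre1}) from (\ref{spq}) in Theorem \ref{gas2}(b). Your explicit verification that $q_i=b^i-b\cdot b^{i-1}=0$ for $i\geq1$ is a welcome detail the paper states without computation.
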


\begin{proof}
The polynomials $P,Q$ in this case are%
\[
P(\xi)=\xi^{k+1}-a_{0}\xi^{k}-\cdots-a_{k},\quad Q(\xi)=\xi^{k}-b\xi^{k-1}.
\]

Clearly, $Q(b)=0$ and if Equality (\ref{cro1}) holds then $P(b)=0$ too, so
Theorem \ref{gas2} applies. We calculate the coefficients of the factor
equation (\ref{fe}) as $q_{0}=1$, $q_{i}=0$ if $i\not =0$ and
\[
p_{i}=b^{i+1}-a_{0}b^{i}-\cdots-a_{i}.
\]

Now, inequality (\ref{spq}) yields (\ref{cre1}) via a straightforward
calculation:
\begin{align*}
1  &  >\sum_{i=0}^{k-1}|b^{i+1}-a_{0}b^{i}-\cdots-a_{i}|+\sigma\\
&  =\sum_{i=0}^{k-1}|b^{i+1}-a_{0}b^{i}-\cdots-a_{i}|+\sigma
\end{align*}

Thus, if $|b|<1$ then by Theorem \ref{gas2} every solution of (\ref{gla1})
converges to zero.
\end{proof}

\medskip

As an application of the preceding corollary, consider the case $k=1$, i.e.,
the second-order equation%
\begin{equation}
x_{n+1}=a_{0}x_{n}+a_{1}x_{n-1}+g_{n}(x_{n}-bx_{n-1}) \label{gla2}%
\end{equation}
which is essentially Equation (\ref{sed1}) on a Banach algebra $\mathbb{X}$.
By Corollary \ref{gasc2}, every solution of (\ref{gla2}) converges to zero if
the functions $g_{n}$ satisfy (\ref{slg}) and
\begin{equation}
b\in\mathcal{G},\quad|b|<1,\quad a_{0}b+a_{1}=b^{2},\quad|a_{0}-b|+\sigma<1.
\label{wcor}%
\end{equation}

On the other hand, according to Lemma \ref{gas3}, every solution of
(\ref{gla2}) converges to zero if the functions $g_{n}$ satisfy (\ref{slg})
and%
\begin{equation}
|a_{0}|+|a_{1}|+\sigma(1+|b|)<1. \label{wc}%
\end{equation}

Parameter values that do not satisfy (\ref{wc}) may satisfy (\ref{wcor}). For
comparison, if $a_{1}=b^{2}-a_{0}b$ then (\ref{wc}) may be solved for $\sigma$
to obtain%
\[
\sigma<\frac{1-|a_{0}|-|b||a_{0}-b|}{1+|b|}.
\]

This is a stronger constraint on $\sigma$ than $\sigma<1-|a_{0}-b|$ from
(\ref{wcor}), especially if $b$ is not near 0.

\medskip

\textbf{Example}. Consider the following difference equation on the real
Banach algebra $C[0,1]$ of all continuous functions on the interval [0,1] with
the sup, or max norm:%
\begin{equation}
x_{n+1}=\frac{\alpha r}{r+1}x_{n}+\frac{\beta(\beta-\alpha r)}{(r+1)^{2}%
}x_{n-1}+\int_{0}^{r}\phi_{n}\left(  x_{n}-\frac{\beta}{r+1}x_{n-1}\right)  dr
\label{glac}%
\end{equation}
where the functions $\phi_{n}:\mathbb{R}\rightarrow\mathbb{R}$ are integrable
and for each $n$ they satisfy the absolute value inequality%
\[
|\phi_{n}(r)|\leq\sigma|r|,\quad r\in\mathbb{R}%
\]
for some $\sigma>0$. Assume that the following inequalities hold%
\[
0<\beta<1,\quad3\beta\leq\alpha<2+\beta,\quad\sigma<\frac{2+\beta-\alpha}{2}%
\]
and define the coefficient functions%
\[
a_{0}(r)=\frac{\alpha r}{r+1},\quad a_{1}(r)=\frac{\beta(\beta-\alpha
r)}{(r+1)^{2}},\quad b(r)=\frac{\beta}{r+1}.
\]

Then $b\in\mathcal{G}$, $b^{2}-a_{0}b=\beta(\beta-\alpha r)/(r+1)^{2}=a_{1}$
\noindent and the following are true about the norms:%
\[
|a_{0}|=\sup_{0\leq r\leq1}\frac{\alpha r}{r+1}=\frac{\alpha}{2},\quad
|b|=\sup_{0\leq r\leq1}\frac{\beta}{r+1}=\beta<1
\]

\noindent and%
\[
|a_{0}-b|=\sup_{0\leq r\leq1}\left\vert \frac{\alpha r-\beta}{r+1}\right\vert
=\max\left\{  \frac{\alpha-\beta}{2},\beta\right\}  =\frac{\alpha-\beta}%
{2}<1-\sigma.
\]

It follows that the conditions in (\ref{wcor}) are met. Next, since the
functions $g_{n}:C[0,1]\rightarrow C[0,1]$ in (\ref{gla2}) are defined as
$g_{n}(x)(r)=\int_{0}^{r}(\phi_{n}\circ x)(r)dr$ for $r\in\lbrack0,1]$ and all
$n$, their norms satisfy%
\[
|g_{n}(x)|\leq\sup_{0\leq r\leq1}\int_{0}^{r}|\phi_{n}(x(r))|dr\leq\sup_{0\leq
r\leq1}\int_{0}^{r}\sigma|x(r)|dr\leq\sigma|x|\sup_{0\leq r\leq1}r=\sigma|x|.
\]

Therefore, Corollary \ref{gasc2} may be applied to conclude that for every
pair of initial functions $x_{0},x_{-1}\in C[0,1]$ the sequence of functions
$x_{n}=x_{n}(r)$ that satisfy (\ref{glac}) in $C[0,1]$\ converges uniformly to
the zero function. In addition, it is worth observing that $a_{0}%
\not \in \mathcal{G}$ and $|a_{0}|\geq1$ if $\alpha\geq2$, in which case
(\ref{wc}) does not hold.

\medskip

\textbf{Remark}. The preceding corollaries and Theorem \ref{gas2} are broad
applications of the reduction of order method to very general equations that
improve the range of parameters compared to Lemma \ref{gas3}. They show that
different patterns of delays may be translated into algebraic problems about
the polynomials $P$ and $Q$ and their root structures. In some cases a more
efficient application of Lemma \ref{fsor} yields a greater amount of
information about the behavior of solutions than Theorem \ref{gas2} provides.
The next result represents a deeper use of order reduction in that sense.

\begin{theorem}
\label{21}In Equation (\ref{gla2}) assume that $b\in\mathcal{G}$ with $|b|<1$
and $a_{0},a_{1}\in\mathbb{X}$ such that $a_{0}b+a_{1}=b^{2}.$ If
$x_{0},x_{-1}$ are given initial values for (\ref{gla2}) for which the
solution of the first order equation%
\begin{equation}
t_{n+1}=(a_{0}-b)t_{n}+g_{n}(t_{n}) \label{gla21}%
\end{equation}
converges to zero with the initial value $t_{0}=x_{0}-bx_{-1}$ then the
corresponding solution of (\ref{gla2}) converges to zero. In particular, if
the origin is a global attractor of the solutions of the first order Equation
(\ref{gla21}) then it is also a global attractor of the solutions of
(\ref{gla2}).
\end{theorem}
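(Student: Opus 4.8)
The plan is to apply the order-reduction Lemma \ref{fsor} to Equation (\ref{gla2}) and then push the convergence of the factor-equation solution through the cofactor equation. First I would record that (\ref{gla2}) is the case $k=1$ of (\ref{gla}) with $b_{0}=1$ and $b_{1}=-b$, so that the associated polynomials are $P(\xi)=\xi^{2}-a_{0}\xi-a_{1}$ and $Q(\xi)=\xi-b$. Since $b\in\mathcal{G}$, and the hypothesis $a_{0}b+a_{1}=b^{2}$ says precisely that $P(b)=0$ while $Q(b)=0$ trivially, the element $\rho=b$ is a common root of $P$ and $Q$ lying in the group of units. Hence Lemma \ref{fsor} applies: computing $p_{0}=\rho-a_{0}=b-a_{0}$ and $q_{0}=b_{0}=1$, the factor equation (\ref{fe}) becomes $t_{n+1}=-(b-a_{0})t_{n}+g_{n}(t_{n})=(a_{0}-b)t_{n}+g_{n}(t_{n})$, which is exactly (\ref{gla21}), and its prescribed initial value is $t_{0}=x_{0}-\rho x_{-1}=x_{0}-bx_{-1}$. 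The cofactor equation (\ref{cfe}) is $x_{n+1}=bx_{n}+t_{n+1}$, whose solution in terms of $\{t_{n}\}$ is given by formula (\ref{cs}), namely $x_{n}=b^{n}x_{0}+\sum_{j=1}^{n}b^{n-j}t_{j}$. Note that this step uses no growth or continuity hypothesis on the $g_{n}$; Lemma \ref{fsor} is purely algebraic, which is what allows Theorem \ref{21} to be stated without assuming (\ref{slg}).

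The second step is to show that if the solution $\{t_{n}\}$ of (\ref{gla21}) starting from $t_{0}=x_{0}-bx_{-1}$ converges to zero, then so does $\{x_{n}\}$, directly from $x_{n}=b^{n}x_{0}+\sum_{j=1}^{n}b^{n-j}t_{j}$. Taking norms and using (\ref{ba}), $|b^{n}x_{0}|\leq|b|^{n}|x_{0}|\to0$ because $|b|<1$, so it remains to prove that $\sum_{j=1}^{n}|b|^{n-j}|t_{j}|\to0$. This is the standard fact that the discrete convolution of a summable geometric sequence with a null sequence is null: given $\varepsilon>0$, choose $N$ with $|t_{j}|<\varepsilon$ for $j>N$, split the sum at $N$, bound the tail $\sum_{j=N+1}^{n}|b|^{n-j}|t_{j}|$ by $\varepsilon/(1-|b|)$, and observe that the head $\sum_{j=1}^{N}|b|^{n-j}|t_{j}|$ is at most $|b|^{\,n-N}$ times the fixed constant $\sum_{j=1}^{N}|b|^{N-j}|t_{j}|$, which tends to $0$ as $n\to\infty$. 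Hence $\limsup_{n}\sum_{j=1}^{n}|b|^{n-j}|t_{j}|\leq\varepsilon/(1-|b|)$, and letting $\varepsilon\to0$ gives the claim, so $|x_{n}|\to0$.

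Finally, the ``in particular'' assertion is immediate: if the origin is a global attractor for the solutions of (\ref{gla21}), then for every choice of initial values $x_{0},x_{-1}$ the solution of (\ref{gla21}) with $t_{0}=x_{0}-bx_{-1}$ converges to zero, so by the previous paragraph the corresponding solution of (\ref{gla2}) converges to zero; that is, the origin is a global attractor for (\ref{gla2}).

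I do not expect a genuine obstacle here. The only point requiring slightly more care than in the proof of Theorem \ref{gas2}(b) is the convolution estimate: there, the exponential bound on $\{t_{n}\}$ lets one sum a geometric series in closed form, whereas here only $t_{n}\to0$ is available, so that closed-form computation must be replaced by the $\varepsilon$-split (head/tail) argument above.
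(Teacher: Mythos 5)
Your proposal is correct and follows essentially the same route as the paper: apply Lemma \ref{fsor} with the common root $\rho=b$ to obtain the factor equation (\ref{gla21}), then pass convergence of $\{t_n\}$ through the cofactor formula (\ref{cs}). The paper merely says to ``argue similarly to the proof of Theorem \ref{gas2}(b)''; your $\varepsilon$-split convolution estimate is exactly the modification that reference actually requires (since only $t_n\to 0$, not an exponential bound, is available here), so you have supplied the detail the paper leaves implicit.
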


\begin{proof}
In this case $Q(\xi)=\xi-b$ so there is only one root $b.$ Now Lemma
\ref{fsor} gives Equation (\ref{gla21}) if $P(b)=0,$ i.e., if $a_{0}%
b+a_{1}=b^{2}.$ Finally, we complete the proof by arguing similarly to the
proof of Theorem \ref{gas2}(b), using (\ref{cs}).
\end{proof}

\medskip

\textbf{Example}. consider the following autonomous equation on the real
numbers%
\begin{equation}
x_{n+1}=ax_{n}+b(b-a)x_{n-1}+\sigma\tanh\left(  x_{n}-bx_{n-1}\right)
\label{th}%
\end{equation}
where $\sigma>0$, $0<b<1$ and $a<b.$ Equation (\ref{gla21}) in this case is%
\begin{equation}
t_{n+1}=h(t_{n})\quad h(\xi)=(a-b)\xi+\sigma\tanh\xi. \label{th1}%
\end{equation}

The function $h$ has a fixed point at the origin since $h(0)=0.$ Further, the
origin is the unique fixed point of $h$ if $|h(\xi)|<|\xi|$ for $\xi\not =0.$
Since $h$ is an odd function, it is enough to consider $\xi>0$. In this case,
$h(\xi)<\xi$ if and only if $\sigma\tanh\xi<(1-a+b)\xi.$ Since $\tanh\xi<\xi$
for $\xi>0$ we may conclude that%
\begin{equation}
\sigma<1-a+b. \label{sig}%
\end{equation}

Given that $a<b$, it is possible to choose $1\leq\sigma<1-a+b$ and extend the
range of $\sigma$ beyond what is possible with (\ref{wcor}) or (\ref{wc}),
which require that $\sigma<1$. In particular, the function $\sigma\tanh\xi$ is
not a contraction near the origin in this discussion.

Routine analysis of the properties of $h$ leads to the following bifurcation scenario:

\begin{enumerate}
\item Suppose that $b-1\leq a<b<1$ and (\ref{sig}) holds. Then $b-a\leq1$ and
all solutions of (\ref{th1}), hence, also all solutions of (\ref{th}) converge
to zero.

\item Now we fix $b,\sigma$ and reduce the value of $a$ so that $a<b-1<0$.
Then the function $h\circ h$ crosses the diagonal at two points $\tau>0$ and
$-\tau$ and a 2-cycle $\{-\tau,\tau\}$ emerges for Equation (\ref{th1}). Note
that (\ref{sig}) still holds when $a$ is reduced, but the origin is no longer
globally attracting. The cycle $\{-\tau,\tau\}$ is repelling and generates a
repelling 2-cycle for (\ref{th}); see \cite{SKy} or \cite{bk2}, Section 5.5.
The emergence of this cycle implies that $\{t_{n}\}$ is unbounded if
$|t_{0}|>\tau$ and it converges to 0 if $|t_{0}|<\tau.$ Therefore, the
corresponding solution $\{x_{n}\}$ of (\ref{th}) also converges to 0 if
\[
|x_{0}-bx_{-1}|=|t_{0}|<\tau;
\]
\noindent i.e., if the initial point $(x_{-1},x_{0})$ is between the two
parallel lines $y=b\xi+\tau$ and $y=b\xi-\tau$ in the $(\xi,y)$ plane.

\item Suppose that $a$ continues to decrease. Then the value of $\tau$ also
decreases and reaches zero when
\[
a=b-\sigma-1
\]
i.e., when the slope of $h$ at the origin is $-1$. Now, the cycle
$\{-\tau,\tau\}$ collapses into the origin and turns it into a repelling fixed
point. In this case, \textit{all} nonzero solutions of (\ref{th1}) and
(\ref{th}) are unbounded.
\end{enumerate}

\medskip

\end{document}